\DeclareMathAlphabet{\mathbbold}{U}{bbold}{m}{n}
\def \k {\mathbbold{k}}
\newtheorem{theorem}[equation]{Theorem}
\newtheorem{proposition}[equation]{Proposition}
\theoremstyle{definition}
\newtheorem{definition}[equation]{Definition}
\theoremstyle{remark}
\theoremstyle{definition}
\newcommand{\Gal}{{\mathrm{Gal}}}
\newcommand{\GG}{{G}}
\newcommand{\PP}{{\mathbb P}}
\newcommand{\Pic}{{\mathrm{Pic}}}
\newcommand{\XX}{{\overline X}}
\title{Quasitriviality of the forms of Segre varieties}
\author{N.F. Zak}
\email{nzak@mccme.ru}
\thanks{The author was partially supported by the RFFI grant 04-01-00613}
\begin{document}
\date{}
\maketitle

\begin{abstract}
We prove the rationality of a $\k$-form $X$ of the product $S$ of projective spaces provided the existence of a
$\k$-point on $X$. The method of the proof is to find a Galois-invariant birational projection of $S$ to the
projective space. This method also allows to prove the quasitriviality of the forms of the hyperplane sections of
some Segre varieties.
\end{abstract}

\bigskip

Consider a variety $X$ defined over a field $\k$ of characteristic zero. The corresponding variety
$X\times_\k\overline \k$ over the algebraic closure $\overline \k$ of the field $\k$ we denote by $\XX$ saying that
$X$ is a form of any variety isomorphic to $\overline X$ over $\overline \k$.
Let $\GG=\Gal(\overline \k/\k)$ be the Galois group. %Ясно, что группа Пикара $\Pic(X)$ содержится в инвариантной
%части $\Pic^{G}(\XX)$ группы Пикара $\Pic(\XX)$.
We will use the following fact below (see [2]):

\begin{proposition}\label{grot}
Let $X$ be a variety with a $\k$-point and $\Pic^{G}(\XX)$ be the Galois-invariant part of the Picard group
$\Pic(\XX)$. Then there is a group isomorphism $\Pic(X)\simeq\Pic^G(\XX)$ and every Galois-invariant linear system
is generated by the divisors defined over $\k$.
\end{proposition}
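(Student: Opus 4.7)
The plan is to exploit the Hochschild--Serre (Leray) spectral sequence for the \'etale sheaf $\mathbb{G}_m$ with respect to the Galois cover $\XX\to X$:
\[
E_2^{p,q}=H^p(G,H^q_{et}(\XX,\mathbb{G}_m))\Longrightarrow H^{p+q}_{et}(X,\mathbb{G}_m).
\]
Using $H^0(\XX,\mathbb{G}_m)=\overline{\k}^{\,*}$ and $H^1_{et}(\XX,\mathbb{G}_m)=\Pic(\XX)$, the low-degree exact sequence becomes
\[
0\to H^1(G,\overline{\k}^{\,*})\to \Pic(X)\to \Pic^G(\XX)\to H^2(G,\overline{\k}^{\,*})\to \mathrm{Br}(X).
\]

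First I would extract injectivity of $\Pic(X)\to\Pic^G(\XX)$ from Hilbert's Theorem~90, which gives $H^1(G,\overline{\k}^{\,*})=0$. The second and more delicate step is surjectivity: the image of a class $L\in\Pic^G(\XX)$ in $H^2(G,\overline{\k}^{\,*})=\mathrm{Br}(\k)$ is precisely the obstruction to descending $L$ to $X$. Here the hypothesis $X(\k)\neq\emptyset$ enters: a $\k$-point $x$ provides a retraction $x^{*}\colon \mathrm{Br}(X)\to\mathrm{Br}(\k)$ of the structure pullback $\mathrm{Br}(\k)\to\mathrm{Br}(X)$, so the latter is injective. Combining this with the exactness at $\mathrm{Br}(\k)$ forces the obstruction map $\Pic^G(\XX)\to\mathrm{Br}(\k)$ to be zero, which is exactly the surjectivity we need.

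For the statement about linear systems, I would invoke flat base change for the quasi-coherent sheaf $L$ lifting to $\XX$, giving a $G$-equivariant isomorphism $H^0(X,L)\otimes_\k\overline{\k}\simeq H^0(\XX,L_{\XX})$, where $G$ acts on the left only through its action on $\overline{\k}$. A Galois-invariant linear subsystem $V\subseteq H^0(\XX,L_{\XX})$ then corresponds, via Galois descent for vector spaces (equivalently, Hilbert~90 for $\mathrm{GL}_n$), to its $\k$-subspace of invariants $V^G\subseteq H^0(X,L)$; in particular $V$ is spanned by sections defined over $\k$, whose zero loci are divisors defined over $\k$.

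The main obstacle is controlling the Brauer-group obstruction in the middle of the five-term sequence; this is exactly where the $\k$-rational point is indispensable, as without it one cannot split the pullback $\mathrm{Br}(\k)\to\mathrm{Br}(X)$ and nontrivial Galois-invariant classes may genuinely fail to descend.
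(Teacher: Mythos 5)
Your argument is correct and is the standard proof of this statement, which the paper itself does not prove but simply cites from EGA; so there is no in-paper argument to diverge from. The five-term exact sequence of the Hochschild--Serre spectral sequence, Hilbert 90 for injectivity, and the splitting of $\mathrm{Br}(\k)\to\mathrm{Br}(X)$ by evaluation at the $\k$-point for surjectivity is exactly the expected route, and Galois descent for the space of sections correctly handles the claim about linear systems. The only point you should make explicit is the standing hypothesis that $X$ is proper and geometrically integral, since you need $H^0(\XX,\mathbb{G}_m)=\overline{\k}^{\,*}$ both for Hilbert 90 to kill $E_2^{1,0}$ and for $E_2^{2,0}$ to be $\mathrm{Br}(\k)$; this holds for every variety to which the paper applies the proposition (forms of products of projective spaces and of hyperplane sections of Segre varieties), but the proposition as stated for a bare ``variety'' would otherwise be too broad. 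A second, purely cosmetic remark: since $G=\Gal(\overline{\k}/\k)$ is profinite, the cohomology groups should be continuous (Galois) cohomology, with $\Pic(\XX)$ viewed as the direct limit of the Picard groups over finite subextensions; this changes nothing in the argument.
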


Also, it is easy to check that the Galois-invariant part of a very ample linear system on the variety with a
$\k$-point defines an embedding over $\k$. Recall the following definitions (see [1]).

\begin{definition}
The variety $X$ is called $\k$-rational or rational over $\k$ if the function field $\k(X)$ is a pure transcendent
extension. $X$ is called rational if $\XX$ is rational over $\overline \k$.
\end{definition}

\begin{definition}
The class of rational varieties defined over $\k$ such that for each variety from this class the existence of a
$\k$-point implies $\k$-rationality, is called {\it quasitrivial over $\k$} and the varieties from this class are
also called {\it quasitrivial over $\k$}.
\end{definition}

It is well known that each form $X$ of the projective space $\PP^n$ is quasitrivial, moreover, Severi--Brauer
theorem states that the existence of a $\k$-point on the form $X$ of the projective space $\PP^n$ implies the
isomorphism $X\simeq_\k\PP^n$. Analogous result does not hold for the products of projective spaces. For example,
the two-dimensional real quadric given by equation $x_0^2+x_1^2+x_2^2=x_3^2$ has real points but is not isomorphic
to the product of real projective lines. Nevertheless, such varieties are still quasitrivial. We prove the following

\begin{theorem}\label{th}
Each form $X$ of the variety $S=\PP^{a_1}\times\ldots\times\PP^{a_n}$ is quasitrivial.
\end{theorem}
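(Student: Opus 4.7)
The plan is to implement the strategy indicated in the abstract: construct a Galois-invariant linear system on $\XX\cong S$ of dimension $N+1$, where $N=\sum a_i=\dim S$, whose associated rational map is birational onto $\PP^N_{\overline\k}$. By Proposition~\ref{grot} and the remark that follows it, such a system descends to a linear system over $\k$ on $X$ and yields a $\k$-birational map $X\dashrightarrow\PP^N_\k$, giving $\k$-rationality.

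Let $P\in X(\k)$ correspond to a $G$-fixed point $(p_1,\dots,p_n)\in S(\overline\k)$; the Galois action permutes the factors by some permutation $\pi_\sigma$ of $\{1,\dots,n\}$ (necessarily preserving each dimension $a_i$) and satisfies $\sigma(p_i)=p_{\pi_\sigma(i)}$. The heart of the construction is an equivariant choice of hyperplanes $H_i\subset\PP^{a_i}$ avoiding $p_i$, carried out one Galois orbit of indices at a time. Fixing a representative $i_\ast$ of such an orbit, the stabilizer $G_{i_\ast}\le G$ acts on $\PP^{a_{i_\ast}}$ while fixing $p_{i_\ast}$; this action is a descent datum presenting the factor as a form of $\PP^{a_{i_\ast}}$ over $\overline\k^{G_{i_\ast}}$ endowed with a rational point. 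By the Severi--Brauer theorem this form is trivial, so one can pick $H_{i_\ast}$ defined over the fixed field (equivalently, $G_{i_\ast}$-invariant) and avoiding $p_{i_\ast}$; extending by Galois translation produces a $G$-equivariant family $\{H_i\}_{i=1}^n$ with $p_i\notin H_i$ for all $i$.

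Let $\ell_i\in V_i:=H^0(\PP^{a_i},\OO(1))$ cut out $H_i$, so $\ell_i(p_i)\neq 0$, and form
\[
 W \;=\; \sum_{i=1}^{n}\ell_1\otimes\cdots\otimes\ell_{i-1}\otimes V_i\otimes\ell_{i+1}\otimes\cdots\otimes\ell_n \;\subset\; V_1\otimes\cdots\otimes V_n \;=\; H^0(S,\OO_S(1,\dots,1)).
\]
The $n$ summands share only the line spanned by $\ell_1\otimes\cdots\otimes\ell_n$, hence $\dim W=1+\sum a_i=N+1$, and since $\sigma$ sends the $i$-th summand to the $\pi_\sigma(i)$-th, the subspace $W$ is $G$-invariant. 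On the $G$-stable open subset $\prod_i(\PP^{a_i}\smallsetminus H_i)=\prod_i\mathbb A^{a_i}$ of $S$, dividing through by $\ell_1\cdots\ell_n$ recovers the standard affine coordinates, so the rational map $S\dashrightarrow\PP(W^\ast)$ restricts there to the obvious identification $\prod_i\mathbb A^{a_i}\xrightarrow{\sim}\mathbb A^N\subset\PP^N$, and is therefore birational.

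The main difficulty lies in the equivariant construction of the $H_i$: the case $n=1$ of the theorem is literally the Severi--Brauer theorem, so that input is unavoidable, and the orbit-by-orbit reduction above is the natural way to bring it in. Once the $H_i$ are in place, the dimension count for $W$ and the birationality check are routine.
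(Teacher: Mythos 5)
Your proof is correct, and it implements the same overall strategy as the paper: a Galois-equivariant choice of hyperplanes $H_i\subset\PP^{a_i}$, one per factor, produced orbit-by-orbit via the Severi--Brauer theorem over the stabilizer field, followed by a Galois-invariant birational map $S\dashrightarrow\PP^N$. Indeed your $W$ is precisely the space of linear forms on the Segre space vanishing on the paper's $\bigcup\mathcal{P}_{ij}$ (each summand $\ell_1\otimes\cdots\otimes V_i\otimes\cdots\otimes\ell_n$ vanishes on every $\mathcal P_{jk}$, and the dimensions match), so your map is the paper's projection $\pi_L$. Where you genuinely diverge is in how the two key ingredients are established, and in both cases your route is simpler. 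The paper proves birationality of $\pi_L$ (Proposition \ref{proj}) by a multi-step geometric argument: reduction to a $\PP^1\times\cdots\times\PP^1$ through two points, then an explicit analysis of the intersection of hyperplane sections $\mathcal H_i$. Your observation that the map defined by $W$ restricts to the tautological isomorphism $\prod_i(\PP^{a_i}\setminus H_i)\cong\mathbb{A}^N$ onto the standard affine chart replaces all of this with one line. You also dispense with the paper's preliminary reduction to products of equal-dimensional factors (which rests on the unargued claim that a form of a product is a product of forms); your orbit-by-orbit construction over the stabilizers handles mixed dimensions directly. Finally, where the paper picks a $\k_1$-divisor $D_1\in\mathcal L_1$ via Proposition \ref{grot} and then argues that its Galois orbit meets each $\mathcal L_j$ at most once, you obtain the equivariant family by construction from a $G_{i_*}$-invariant $H_{i_*}$; these are equivalent. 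Two small points to make explicit in a final write-up: the $G_{i_*}$-action on the factor $\PP^{a_{i_*}}$ should be defined as the action induced on the target of the contraction given by the Galois-invariant subsystem $\mathcal L_{i_*}$ (it is not an action on a factor of the product per se), and the concluding descent step should invoke Proposition \ref{grot} (hence the $\k$-point) to choose a basis of $W$ defined over $\k$, so that the target of the resulting $\k$-rational map is honestly $\PP^N_\k$ rather than a form of it.
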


The following Proposition gives a method (which is far from being optimal, of course) of proving the rationality of
$S$.

\begin{proposition}\label{proj}
Consider standard Segre embedding of $S$. Pick a hyperplane $H_i$ in the $i$-st multiplier and consider the
subvarieties
$$
\mathcal{P}_{ij}=\PP^{a_1}\times\ldots\times\PP^{a_{i-1}}\times
H_i\times\PP^{a_{i+1}}\times\ldots\times\PP^{a_{j-1}}\times H_j\times\PP^{a_{j+1}}\times\ldots\times\PP^{a_n},
\mbox{ } i<j.
$$
Let $L$ be the projective span of $\mathcal P=\bigcup{\mathcal{P}_{ij}}$. The projection $\pi_L :
S\dashrightarrow\PP^{\sum{a_i}}$ is birational. In particular, if $S$ and $L$ are defined over $\k$ then the
projection $\pi_L$ is birational over $\k$.
\end{proposition}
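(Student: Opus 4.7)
\medskip

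\noindent\textbf{Proof plan.}
The plan is to choose explicit coordinates that diagonalize the situation: pick homogeneous coordinates $x^{(i)}_0,\ldots,x^{(i)}_{a_i}$ on $\PP^{a_i}$ with $H_i=\{x^{(i)}_0=0\}$, and let $y_I:=\prod_{i=1}^n x^{(i)}_{k_i}$ for multi-indices $I=(k_1,\ldots,k_n)$ with $0\le k_i\le a_i$ be the Segre coordinates on the ambient $\PP^N$, where $N=\prod(a_i+1)-1$. With this choice, each $\mathcal{P}_{ij}$ is cut out on $S$ by $x^{(i)}_0=x^{(j)}_0=0$, which forces $y_I=0$ whenever $k_i=0$ or $k_j=0$.

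First I would identify the span of $\mathcal{P}_{ij}$. Since $\mathcal{P}_{ij}$ is itself a product of projective spaces, Segre-embedded in the coordinate subspace $V_{ij}:=\{y_I=0\text{ unless }k_i>0\text{ and }k_j>0\}$, and since any Segre variety is linearly non-degenerate in its natural ambient space, one has $\langle\mathcal{P}_{ij}\rangle=V_{ij}$. Taking the sum over all pairs $i<j$ gives
$$
L=\langle\mathcal{P}\rangle=\bigl\{y\in\PP^N : y_I=0\text{ whenever at most one }k_i\text{ is positive}\bigr\}.
$$
Thus the complementary coordinate subspace is spanned by the $e_I$ for which at most one entry of $I$ is positive. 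Counting these indices (the all-zero one plus, for each $i$, the $a_i$ choices of a single positive coordinate) gives $1+\sum a_i$ basis vectors, so the target of $\pi_L$ is indeed $\PP^{\sum a_i}$, matching the statement.

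Next I would write out $\pi_L$ in coordinates: a point $(x^{(1)},\ldots,x^{(n)})\in S$ maps to the collection of coordinates $y_{(0,\ldots,0)}=\prod_i x^{(i)}_0$ together with, for each $i$ and $1\le k\le a_i$, the coordinate $y_{(0,\ldots,k,\ldots,0)}=x^{(i)}_k\prod_{j\ne i}x^{(j)}_0$. On the dense open subset $U\subset S$ where every $x^{(i)}_0\ne 0$, one can normalize by the factor $\prod x^{(j)}_0$ to rewrite the image as
$$
\bigl(1 : (x^{(i)}_k/x^{(i)}_0)_{1\le k\le a_i,\,1\le i\le n}\bigr)\in\PP^{\sum a_i}.
$$
This identifies $\pi_L|_U$ with the standard open immersion of the affine chart of $S$, so $\pi_L$ is birational; a rational inverse is given explicitly by sending $(z_0:z^{(1)}_1:\cdots:z^{(n)}_{a_n})$ to the $n$-tuple of projective points $(z_0:z^{(i)}_1:\cdots:z^{(i)}_{a_i})_{i=1}^n$. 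The last clause of the proposition is automatic: all constructions are intrinsic once $S$ and $L$ are defined over $\k$, so $\pi_L$ is defined over $\k$.

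The only step that requires genuine care is the identification $\langle\mathcal{P}_{ij}\rangle=V_{ij}$; this is the linear non-degeneracy of the Segre embedding restricted to the product $\PP^{a_1}\times\cdots\times H_i\times\cdots\times H_j\times\cdots\times\PP^{a_n}$, which can be verified directly by writing down monomial sections or deduced from the fact that the Segre embedding is projectively normal. Everything else is a mechanical dimension count and coordinate computation.
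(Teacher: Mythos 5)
Your proof is correct, and it takes a genuinely different (and in fact more explicit) route than the paper. The paper argues geometrically: it first checks finiteness of $\pi_L$ by a dimension count, then observes that any two points of $S$ lie on a sub-product $S_1=\PP^1\times\cdots\times\PP^1$, and reduces to showing that the intersection of $n$ hyperplane sections of $S_1$ through $L\cap S_1$ and a general point $q$ consists only of $q$ and the base locus $P=\bigcup P_{ij}$ — a short combinatorial case analysis on the coordinates of a point $r\in\mathcal H\cap S_1$. You instead identify $L$ exactly as the coordinate subspace spanned by the $e_I$ with at least two positive entries (via linear nondegeneracy of the Segre embedding of each $\mathcal P_{ij}$ in its coordinate subspace $V_{ij}$), so that $\pi_L$ is given by the complementary coordinates $y_I$ with at most one positive entry; this exhibits $\pi_L$ on the chart $\{\prod x^{(i)}_0\ne 0\}$ as the standard isomorphism onto an affine chart of $\PP^{\sum a_i}$, with an explicit inverse. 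Your approach buys an explicit rational inverse, a direct verification that the target really is $\PP^{\sum a_i}$ (which the paper only asserts via ``counting the dimension of $L$''), and a description of the base locus; the paper's approach is coordinate-free and its reduction to $\PP^1\times\cdots\times\PP^1$ through a pair of points is the kind of argument that adapts to the hyperplane-section situation treated later. The one point you rightly flag — $\langle\mathcal P_{ij}\rangle=V_{ij}$ — is indeed the only nontrivial linear-algebra input, and your justification (linear independence of the monomials $\prod_l x^{(l)}_{k_l}$ with $k_i,k_j\ge 1$ restricted to $\mathcal P_{ij}$) is adequate; note also that for $L\supseteq\mathrm{span}\{e_I:\ I\text{ has two positive entries}\}$ it suffices that each such $e_I$ lies in some single $\langle\mathcal P_{ij}\rangle$, which your identification gives.
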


\begin{proof}
The finiteness of the projection $\pi_L : S\dashrightarrow\PP^{\sum{a_i}}$ can be checked by counting the dimension
of $L$. Note that through every pair of points $p,q\in S$ their passes a subvariety
$$
S_1=T_1\times\ldots\times T_n\subset S,\mbox{    where    } T_i=\PP^1\subset\PP^{a_i},
$$
so, due to the finiteness of the general fiber of $\pi_L$, it is sufficient to prove birationality of projection
$\pi_L: S_1\to\nolinebreak\PP^n$ for a general point $p$ and any point $q$. For such a pair of points we have
$\mathcal P\cap S_1=\bigcup{P_{ij}}$ and, moreover, $L\cap S_1=\bigcup{P_{ij}}$ where

$$
P_{ij}=T_1\times\ldots\times T_{i-1}\times p_i\times T_{i+1}\times\ldots\times T_{j-1}\times p_j\times
T_{j+1}\times\ldots\times T_n,\mbox{ } i<j
$$
and $p_i=H_i\cap T_i$. Thus it suffices to prove the birationality of projection $\pi_{L_1}:
S_1\dashrightarrow\PP^n$ where $L_1$ is the linear span of $P=\bigcup{P_{ij}}$. Consider a general point
$q=q_1\times\ldots\times q_n\in S_1$. Let
$$
Q_i=T_1\times\ldots\times T_{i-1}\times q_i\times T_{i+1}\times\ldots\times T_n, $$$$P_j=T_1\times\ldots\times
T_{j-1}\times p_j\times T_{j+1}\times\ldots\times T_n.
$$
Consider $n$ independent hyperplane sections ${\mathcal H}_i\subset S_1$ containing $L_1$ and $q$:
$$
{\mathcal H}_i=Q_i\cup P^i, \mbox{ where } P^i=\bigcup_{j\ne i}P_j.
$$
Let $\mathcal H=\bigcap \mathcal{H}_i$. The birationality of projection $\pi_{L_1}$ is equivalent to the equality
$$
\mathcal H\cap S_1=\{q\}\cup P.
$$
Consider a point $r=r_1\times\ldots\times r_n\in\mathcal H\cap S_1$. Since $r\in Q_i\cup P^i$, either $r_i=q_i$ or
$r_k=p_k$ for some $k$. If the first equality holds for each $i$ then $r=q$. If $r_k=p_k\ne q_k$ then, since $r\in
Q_k\cup P^k$, for some $l\ne k$ holds $r_l=p_l$, so $r\in P_{kl}\subset P$ which proves the birationality of
$\pi_{L_1}$.
\end{proof}

\begin{proof}[The proof of Theorem \ref{th}] Since the Galois group can not permute the contractions
to the varieties of different dimensions, $\k$-form $X$ of the variety $S$ is isomorphic to the product of
$\k$-forms of the varieties $S_i=\PP^{a_i}\times\ldots\times\PP^{a_i}$ and it is sufficient to prove the
quasitriviality of these multipliers.

Consider variety $S = \PP^n\times\ldots\times\PP^n$. The linear system defining Segre embedding of $S$ is
Galois-invariant because it is proportional to the canonical class $K_S$. Hence, due to Proposition \ref{grot} we
can assume  that $\XX$ is embedded by Segre. We will construct the Galois-invariant union $\bigcup H_i$ so that
$\bigcup \mathcal{P}_{ij}$ is also Galois-invariant and by Severi--Brauer theorem Proposition \ref{proj} implies
quasitriviality of $X$.

Denote by ${\mathcal L_i}$ the linear system defining the natural projection of $S$ to the $i$-st multiplier. Let
$\k_i\supset \k$ be the minimal filed over which the linear system ${\mathcal L_i}$ is defined. Choose (using
Proposition \ref{grot}) a $\k$-divisor $D_1=H_1\times\PP^n\times\ldots\times\PP^n$ in the corresponding
Galois-invariant linear system.

Note that $\GG$-orbit $O_1$ of the divisor $D_1$ consists of such divisors $D_i$ that $D_i\nsim D_j$ in the group
$\Pic(\XX)$ for $i\ne j$. Otherwise, there exists such an element $g\in G$ that $g(D_i)\ne D_i$ but $g(D_i)\sim D_i$
for some $i$  and a conjugate to $g$ element $h\in G$ such that $h(D_1)\ne D_1$ but $h(D_1)\sim D_1$. Since in this
case $h$ acts trivially on $\k_1$ and $D_1$ is defined over $\k_1$, we obtain a contradiction.

If in the orbit $O_1$ there is no divisors from the linear system ${\mathcal L_j}$, consider the orbit $O_j$ of the
corresponding divisor $D_j$, and so on for all $j$. The union of such orbits will give us the invariant center $L$
of the projection which proves Theorem \ref{th}.
\end{proof}
The proof above may be generalized to the hyperplane sections of Segre varieties.
\begin{proposition}
Each form $Y$ of the hyperplane section $W=H\cap S\subset\PP^{ab+a+b-1}$ of Segre variety
$S=\PP^a\times\PP^b\subset\PP^{ab+a+b}$ is quasitrivial.
\end{proposition}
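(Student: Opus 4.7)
The plan is to adapt the proof of Theorem \ref{th}: construct a Galois-equivariant birational projection from $Y$ to a variety whose quasitriviality is standard. For the hyperplane section $W=H\cap S$, the natural target is a quadric hypersurface in $\PP^{a+b}$, which is quasitrivial by stereographic projection from a $\k$-point.

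The central geometric observation is that the inverse of the birational projection $\pi_L\colon S\dashrightarrow\PP^{a+b}$ provided by Proposition \ref{proj} is of degree two. Explicitly, in coordinates $(t_0,\ldots,t_a,s_1,\ldots,s_b)$ on $\PP^{a+b}$ with the convention $s_0:=t_0$, the inverse map is $[t:s]\mapsto[x_{ij}]$ with $x_{ij}=t_is_j$ in the Segre coordinates of $\PP^{ab+a+b}$. Therefore, if $W$ is cut out of $S$ by the linear form $\sum m_{ij}x_{ij}$, its preimage under $\pi_L^{-1}$ is the quadric $Q=\{\sum m_{ij}t_is_j=0\}\subset\PP^{a+b}$, and $\pi_L$ restricts to a birational equivalence $W\simeq Q$.

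To make this construction $\k$-equivariant I would follow the orbit argument used at the end of the proof of Theorem \ref{th}. The Segre class $H_1+H_2$ on $\XX$ is Galois-invariant: the action on $\Pic(\XX)=\ZZ H_1\oplus\ZZ H_2$ preserves $K_{\XX}=-aH_1-bH_2$, and hence either fixes each $H_i$ or (only when $a=b$) swaps them, fixing $H_1+H_2$ in both cases. By Proposition \ref{grot} the form $Y$ is then embedded in $\PP_{\k}^{ab+a+b-1}$ via $|H_1+H_2|$. The pair of ruling systems $|H_1|,|H_2|$ is Galois-stable, and the orbit argument of the theorem supplies a Galois-invariant union $\bigcup H_i$ of hyperplane divisors so that $\mathcal{P}=\bigcup(H_i\times H_j)$ has a $\k$-defined linear span $L$. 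The projection $\pi_L$ and its image quadric $Q\subset\PP_{\k}^{a+b}$ then inherit $\k$-structures, giving a $\k$-birational equivalence $Y\dashrightarrow Q$.

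To conclude, the $\k$-point on $Y$ yields a $\k$-point on $Q$ (resolve the indeterminacy of the $\k$-birational map by $\k$-defined blowups, whose $\k$-rational exceptional divisors surject onto $Q$), and a quadric over $\k$ with a $\k$-point is $\k$-rational via stereographic projection. I expect the principal obstacle to be the orbit-averaging step when $a=b$ and Galois swaps the two factors of $S$: in that case $H_1\times H_2$ is not itself Galois-stable, and one must verify that the span of its Galois orbit still has dimension $ab-2$ in the ambient $\PP^{ab+a+b-1}$ of $Y$, so that $\pi_L$ still projects $W$ birationally onto an irreducible quadric in $\PP^{a+b}$ rather than onto something of lower dimension, exactly the analog of the dimension verification at the end of the proof of Theorem \ref{th}.
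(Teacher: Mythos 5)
Your route differs from the paper's: instead of projecting from $L\cap H$ into $\PP^{a+b+1}$ and invoking B.~Segre's classification [3] of hypersurfaces with large families of linear subspaces (which yields the dichotomy ``quadric or rational scroll''), you write down the quadratic inverse of $\pi_L$ and read off the image of $W$ directly. The computation $x_{ij}=t_is_j$ (with $s_0:=t_0$) is correct, and this is a more elementary and more explicit identification of the same variety (the paper's $Q_H=\pi_{L\cap H}(W)$ coincides with $\overline{\pi_L(W)}$ up to a linear isomorphism of $\PP^{a+b}$). However, your assertion that $\pi_L$ restricts to a birational equivalence of $W$ with the quadric $\{\sum m_{ij}t_is_j=0\}$ has a genuine gap: $\pi_L(W)$ is only \emph{contained} in that quadric, and the quadratic form $q=\sum m_{ij}t_is_j|_{s_0=t_0}$ can be reducible even when $W$ is irreducible. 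For example, with $a=b=1$, $H=\{x_{01}+x_{10}=0\}$ and $H_1,H_2$ the coordinate hyperplanes, one gets $q=t_0(t_1+s_1)$, and the image of the irreducible curve $W$ is the line $\{t_1+s_1=0\}$, not a conic. This degenerate case is precisely what surfaces as the ``rational scroll'' alternative in the paper; it is harmless (the image is then a hyperplane, so $W$ is $\k$-birational to $\PP^{a+b-1}$ outright), but it must be either handled or excluded by a genericity argument on $L$, and your text instead worries about a different (and, thanks to the orbit construction of Theorem \ref{th}, already resolved) issue concerning the $a=b$ swap.

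The second gap is in the endgame. A quadric over $\k$ with a $\k$-point need \emph{not} be $\k$-rational: a quadric cone over a conic without $\k$-points has a $\k$-point at its vertex but only finitely many (or a lower-dimensional set of) $\k$-points, so stereographic projection requires a \emph{smooth} $\k$-point. Since the statement allows singular hyperplane sections $W$, the image quadric can be a cone, and your mechanism for transporting the $\k$-point (``exceptional divisors surject onto $Q$'') is not a valid argument --- exceptional divisors of a resolution do not surject onto $Q$, and Lang--Nishimura itself requires a smooth point of the source. The paper's fix is the right one: choose $L$ generically within the Galois-invariant family so that the given $\k$-point of $W$ avoids $L$ and maps to a smooth point of the image; you need to incorporate this step explicitly.
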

\begin{proof}
Note that the pair of linear systems defining the projections of $W$ to the multipliers $\PP^a$ and $\PP^b$ is
Galois-invariant (the Galois group can only transpose the projections to different multipliers in the case $a=b$).
Thus, the natural inclusion $W\hookrightarrow S$ is Galois-invariant and by Proposition \ref{grot} we can assume
that $\overline Y=W=H\cap S\subset\PP^{ab+a+b-1}$ where $S$ and $H$ are defined over $\k$.

Consider the invariant center $L$ of the projection $\pi_L: S\dashrightarrow\PP^{a+b}$ and the birational projection
$$
\pi_{L_H}: S\dashrightarrow Q\subset\PP^{a+b+1}
$$
with center $L_H=L\cap H$ to the hypersurface $Q\subset\PP^{a+b+1}$. Over $\overline{\k}$, through every point $p\in
L_H\cap S$ there passes $(a+b-2)$-dimensional family of the subvarieties of type $\PP^1\times\PP^1\subset S$ and
general subvariety of this type is projected to a plane on $Q$ via $\pi_{L_H}$. Therefore, $Q$ contains a
$(2(a+b)-5)$-dimensional family of planes and its hyperplane section $Q_H=\pi_{L_H}(W)$ has dimension $a+b-1$ and
contains $(2(a+b-1)-3)$-dimensional family of lines. Thus, according to [3], hypersurface $Q_H$ is either a quadric
or a rational (due to rationality of $W$) scroll in $P^{a+b}$, that is a family of hyperplanes parameterized by a
rational curve.

Quasitriviality of quadric is well-known. Let $Q_H$ be a rational scroll. Due to the generality of $L$, the
existence of a smooth $\k$-point on $Q_H$ follows from the existence of a $\k$-point on $W$. In this case $Q_H$ is
birational to the projective bundle over $\PP^1$ with a fiber $\PP^{a+b-2}$. General codimension $(a+b-2)$ subspace
gives a rational section to this bundle, so the hypersurface $Q_H$ is quasitrivial and, due to the birationality of
the projection $\pi_{L_H}$, the variety $W$ is also quasitrivial.
\end{proof}

\medskip

The author is grateful to S.\,Galkin, S.\,Gorchinskiy, V.\,Iskovskikh, Yu.\,Prokhorov, C.\,Shramov and F.\,Zak for
useful discussions.

\end{document}